\DeclareMathAlphabet{\mathbbo}{U}{bbold}{m}{n}
\def\Z{\mathbb{Z}}
\newcommand{\N}{\ensuremath{\mathbb{N}}}
\newcommand{\Zd}{\ensuremath{\mathbb{Z}^d}}
\newcommand{\R}{\ensuremath{\mathbb{R}}}
\newcommand{\C}{\ensuremath{\mathbb{C}}}
\renewcommand{\P}{\ensuremath{\mathbb{P}}}
\renewcommand{\epsilon}{\varepsilon}
\renewcommand{\Re}{\text{Re }}
\newcommand{\miniop}[3]{%
\renewcommand{\arraystretch}{0.6}
\begin{array}{c}
{\scriptstyle #1}\\
#2\\
{\scriptstyle #3}
\end{array}
\renewcommand{\arraystretch}{1}}
\DeclareMathOperator{\Log}{Log}
\DeclareMathOperator{\Dir}{Dir}
\newcommand{\1}{\mathbbo{1}}
\newtheorem{theo}{Theorem}
\newtheorem{lemma}{Lemma}
\newtheorem{coro}{Corollary}
\def\legendre@dash#1#2{\hb@xt@#1{%
\kern-#2\p@
\cleaders\hbox{\kern.5\p@
\vrule\@height.2\p@\@depth.2\p@\@width\p@
\kern.5\p@}\hfil
\kern-#2\p@
}}
\def\@legendre#1#2#3#4#5{\mathopen{}\left(
\sbox\z@{$\genfrac{}{}{0pt}{#1}{#3#4}{#3#5}$}%
\dimen@=\wd\z@
\kern-\p@\vcenter{\box0}\kern-\dimen@\vcenter{\legendre@dash\dimen@{#2}}\kern-\p@
\right)\mathclose{}}
\newcommand\legendre[2]{\mathchoice
{\@legendre{0}{1}{}{#1}{#2}}
{\@legendre{1}{.5}{\vphantom{1}}{#1}{#2}}
{\@legendre{2}{0}{\vphantom{1}}{#1}{#2}}
{\@legendre{3}{0}{\vphantom{1}}{#1}{#2}}
}
\def\dlegendre{\@legendre{0}{1}{}}
\def\tlegendre{\@legendre{1}{0.5}{\vphantom{1}}}
\author{Olivier \textsc{Garet}}
\title{How often is $x\mapsto x^3$ one-to-one in $\Z/n\Z$~?}
\newcounter{thmletter}
\newtheorem{theoremletter}[thmletter]{Theorem}
\newtheorem{remark}{Remark}
\begin{document}
\subjclass[2000]{11B05,11M06,11N37.}
\keywords{density of integer sets}

\begin{abstract}
We characterize the integers $n$ such that $x\mapsto x^3$ describes a bijection from the set $\Z/n\Z$ to itself and we determine the frequency of these integers. Precisely, denoting by $W$  the set of these integers, we prove that an integer belongs to $W$ if and only if it is square-free with no prime factor that is congruent to 1 modulo 3, 
and that there exists $C>0$ such that
$$|W\cap\{1,\dots,n\}|\sim C\frac{n}{\sqrt{\log n}}.$$
These facts (or equivalent facts) are stated without proof on the OEIS~\cite[A074243]{oeis} website. 
We give the explicit value of $C$, which did not seem to be known. 
Analogous results are also proved for families of integers for which congruence classes for prime factors are imposed. 
The proofs are based on a Tauberian Theorem by Delange. 
\end{abstract}

\maketitle
\section{Introduction}

It is well known that the map $x\mapsto x^3$ realizes a bijection on the real line. It is natural to wonder what the situation is in $\Z/n\Z$. The study of a few examples quickly shows that the integers $n$ that enjoy this property are relatively numerous: with the help of computer tools, we can, for example, draw up a list of them between $1$ and $50$: 1, 2, 3, 5, 6, 10, 11, 15, 17, 22, 23, 29, 30, 33, 34, 41, 46, 47.

This sequence is referenced on the On-Line Encyclopedia of Integer Sequences (OEIS)~\cite[A074243]{oeis} website, 
which lists many remarkable sequences.
Some properties of this sequence are stated there, without reference to proof.
In particular, it is stated that there is a constant $c$ such that the $n$-th term of the sequence is equivalent to $c n\sqrt{\log n}$.

In fact, we will prove the following result, which is equivalent, but whose statement seems more meaningful:


Noting $W$ as the set of integers $n$ such that the application $x\mapsto x^3$ realizes a bijection of $\Z/n\Z$ to itself,  there exists $C>0$ such that
$$|W\cap\{1,\dots,n\}|\sim C\frac{n}{\sqrt{\log n}},$$

which is equivalent to the result announced by the OEIS, with $C=\frac1{c}$. We give a literal value for $C$, which is close to $0.664$.

The natural density of this set is therefore zero, but those numbers are less spaced out than prime numbers. 
This is not surprising. According to the Chinese theorem, if $p$ and $q$ are coprime integers and $a$ is a cube modulo $p$ and modulo $q$,
 it will also be a cube modulo $pq$, and vice versa. So, the fact that $W$ is stable for the product of coprime elements mechanically generates 
 some combinatorics.

This may bring to mind another set of integers, also stable through the product of coprime integers: 
the set of integers $S$ that can be written as the sum of two squares. It is known that 
$$|S\cap\{1,\dots,n\}|\sim b\frac{n}{\sqrt{\log n}},$$
where $b$ is the Landau-Ramanujan constant, which is close to $0.764$~\cite[A064533]{oeis}.
This result, demonstrated by Landau~\cite{zbMATH02640396} and rediscovered by Ramanujan, is frequently mentioned in monographs on analytic number theory (see, for example, Tenenbaum~\cite{zbMATH00989379}).

This remark gives us the method for exploring the problem: to characterize the elements of $W$, we need to determine under which conditions $p^{\alpha}$ is in $W$ when $p$ is a prime number.
This is the subject of section 2, where we will show that $p^{\alpha}\in W$ if and only if $\alpha=1$ and $p$ is not congruent to $1$ modulo 3.


The determination of an equivalent of the number of integers smaller than $x$ whose prime factors belong to a family of congruences modulo $m$ that are coprime with $m$ is an old problem, solved at the beginning of the 20th century by Landau~\cite{zbMATH02636963}. A little over half a century later, Wirsing revisited the problem, in order to link the number of integers less than $x$ whose prime factors belong to a certain family of prime numbers to the regularity of this family~\cite{zbMATH03125407,zbMATH03169573}.
This problem is mentioned in Tenenbaum's monograph~\cite{zbMATH00989379}, where he proposes an exercise (or rather a series of exercises) leading, in the spirit of Wirsing, to the obtention of an explicit equivalent for the number of integers not exceeding $x$ whose prime factors are all of the form $4m+3$.

The question of the existence and calculation of a natural density for integers without a square factor is much better known; 
it can be found as an exercise in many mathematics textbooks (see for example Garet--Kurtzmann~\cite{zbMATH07063635}). 
The result is often attributed to Gegenbauer~\cite{zbMATH02699867}, but it was demonstrated the same year by Ces\`{a}ro~\cite{cesaro3}. 
\footnote{The articles by Gegenbauer and Ces\`{a}ro were published in 1885, however it can be noted that a similar result on pairs of coprime integers had already been demonstrated at the same time by several authors~\cite{zbMATH02716583,zbMATH02703467}.}

Our bibliographic research has not led us to a situation that combines both of these conditions.

It is time to state our main results.
We start with the general theorem:

\begin{theo}\label{montheo}
Let $Q$ be the set of square-free integers. For $m$ a positive natural integer and $A\subset \{1,\dots,m\}$, 
let $I_m(A)$ be the set of positive natural integers for which none of the prime factors is congruent modulo $m$ to an element of $A$.

There exist non-zero positive constants $(c_a)_{1\le a\le m}$ such that the following result holds: for $A\subset\{1,\dots,m\}$, 
provided that $\ell=|\{a\in A; a\wedge m=1\}|<\phi(m)$, where $\phi$ denotes the Euler totient,
then $n\to\infty$ we have
\begin{align}
|I_m(A)\cap \{1,\dots,n\}|\sim \left(\prod_{a\in A}c_a\right) \frac{n}{(\log n)^{\frac{\ell}{\phi(m)}}}
\end{align}
and
\begin{align}
|I_m(A)\cap Q\cap \{1,\dots,n\}|\sim\left(\prod_{a\in A}c_a\right) \prod_{\substack{p\not\in A+m\Z\\ p\text{\emph{ prime}}}} \left(1-\frac1{p^2}\right) \frac{n}{(\log n)^{\frac{\ell}{\phi(m)}}}.
\end{align}
In the degenerate case where $\ell=\phi(m)$, we denote by $B$ the set of prime divisors of $m$ that are not in $A$. The elements in $I_m(A)$ are the integers whose prime factors are in $B$. If $B$ is empty, $I_m(A)$ is reduced to $\{1\}$, otherwise we have
\begin{align}
|I_m(A)\cap \{1,\dots,n\}|\sim \frac{(\log n)^{|B|}}{|B|!}\prod_{p\in B}\frac1{\log p} .
\end{align}
Of course, if $\ell=\phi(m)$, $I_m(A)\cap Q$ is finite, with $|I_m(A)\cap Q|=2^{|B|}$.
\end{theo}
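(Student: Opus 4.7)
The approach is the classical Selberg--Delange method. Form the Dirichlet series
$$F_A(s) = \sum_{n \in I_m(A)} n^{-s} = \prod_{\substack{p \text{ prime} \\ p \bmod m \notin A}}(1-p^{-s})^{-1},$$
and aim to write $F_A(s) = (s-1)^{-(1-\ell/\phi(m))} G_A(s)$ with $G_A$ continuous and non-vanishing in a neighborhood of the closed half-plane $\Re s \geq 1$. Factoring out $\zeta(s)$ and isolating the (finite, analytic) contribution from the primes dividing $m$, the task reduces to analysing, for each class $a$ in $A^* := \{a \in A : a \wedge m = 1\}$, the product $\prod_{p \equiv a \pmod m,\, p \nmid m}(1-p^{-s})$.

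This is carried out via orthogonality of Dirichlet characters modulo $m$:
$$\log \prod_{\substack{p \equiv a \pmod m \\ p \nmid m}}(1-p^{-s})^{-1} = \frac{1}{\phi(m)}\sum_{\chi}\overline{\chi(a)} \log L(s, \chi) + R_a(s),$$
with $R_a$ holomorphic for $\Re s > 1/2$. Near $s = 1$ the principal character contributes a singularity $\frac{1}{\phi(m)}\log\frac{1}{s-1}$, while the non-principal characters give continuously extendable terms thanks to the non-vanishing of $L(s, \chi)$ on the line $\Re s = 1$. Hence $\prod_{p \equiv a \pmod m,\, p \nmid m}(1-p^{-s}) = (s-1)^{1/\phi(m)} E_a(s)$ with $E_a$ continuous and non-vanishing there, and one deduces the announced factorisation of $F_A$, with $G_A(1) = \prod_{a \in A^*} E_a(1) \cdot \prod_{p \mid m,\, p \in A}(1 - 1/p)$. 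Delange's Tauberian theorem then yields the first asymptotic; the constants $c_a$ are set so that $\prod_{a \in A} c_a$ matches $G_A(1)/\Gamma(1 - \ell/\phi(m))$, with $c_p = 1 - 1/p$ for primes $p \mid m$ and an appropriate normalisation for the classes coprime to $m$.

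For the square-free variant one uses $\sum_{n \in I_m(A) \cap Q} n^{-s} = F_A(s) \prod_{p \bmod m \notin A}(1-p^{-2s})$, where the corrective Euler product converges absolutely at $s = 1$, multiplying the leading constant by $\prod(1 - 1/p^2)$; a second invocation of Delange's theorem delivers the second asymptotic. The degenerate case $\ell = \phi(m)$ lies outside the Tauberian framework: only the finitely many primes of $B$ remain admissible, so integers in $I_m(A)$ correspond to lattice points $(e_p)_{p \in B} \in \N^B$ with $\sum_p e_p \log p \leq \log n$, whose count is asymptotic to the volume $(\log n)^{|B|}/(|B|!\prod_{p \in B}\log p)$ of the defining simplex, while $I_m(A) \cap Q$ consists simply of the $2^{|B|}$ squarefree products of distinct elements of $B$. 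The principal obstacle throughout is verifying the regularity hypothesis of Delange's theorem---that $G_A$ admits a suitable continuous extension to a neighborhood of $\Re s \geq 1$---which rests crucially on the non-vanishing of Dirichlet $L$-functions on $\Re s = 1$; a secondary technicality is organising the constants $c_a$ so that the multiplicative expression $\prod_{a \in A} c_a$ is consistent across all admissible $A$, which requires some care in absorbing the $\ell$-dependent gamma factor.
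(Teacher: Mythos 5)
Your plan follows essentially the same route as the paper: Euler-product factorisation of the Dirichlet series, extraction of a $\zeta$ factor and of the finite product over the primes dividing $m$, character orthogonality together with the non-vanishing of $L(s,\chi)$ on $\Re s=1$ to isolate the $(s-1)^{1/\phi(m)}$ singularity contributed by each class $a$, Delange's Tauberian theorem, an absolutely convergent corrective factor $\prod(1-p^{-2s})$ for the square-free variant, and a simplex lattice-point count in the degenerate case $\ell=\phi(m)$. The only delicate point is the one you already flag yourself: the factor $1/\Gamma(1-\ell/\phi(m))$ coming from Delange's theorem does not factor over $a\in A$, a normalisation subtlety that the paper's own proof in fact passes over silently.
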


We can easily deduce the desired result:

\begin{theo}\label{theocubes}
Let $W$ be the set of integers $n$ such that $x\mapsto x^3$ realizes a bijection from $\Z/n\Z$ to itself.
There exists a constant $C>0$ such that, at infinity
$$ |\{W\cap\{1,\dots,n\}|\sim C\frac{n}{(\log n)^{1/2}}.$$
\end{theo}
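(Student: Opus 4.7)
The plan is to combine the prime-power characterization of $W$ from Section 2 with the square-free case of Theorem \ref{montheo}. First I will invoke the result of Section 2, which says that $n \in W$ if and only if $n$ is square-free and every prime divisor of $n$ is not congruent to $1$ modulo $3$. The proof in Section 2 will use the Chinese remainder theorem to reduce to prime powers $p^{\alpha}$, and a short analysis will then show that $x \mapsto x^3$ is a bijection on $\Z/p^{\alpha}\Z$ exactly when $\alpha = 1$ and $3 \nmid p-1$.

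Once this characterization is in hand, the translation into the framework of Theorem \ref{montheo} is immediate: denoting by $Q$ the set of square-free integers,
\[
W = Q \cap I_3(\{1\}).
\]
I will apply Theorem \ref{montheo} with $m = 3$ and $A = \{1\}$. Here $\phi(m) = 2$ and $\ell = |\{a \in A : a \wedge m = 1\}| = 1$, so $\ell = 1 < 2 = \phi(m)$ and we are in the non-degenerate regime. The second asymptotic of Theorem \ref{montheo} applies directly and yields
\[
|W \cap \{1,\dots,n\}| \sim c_1 \prod_{\substack{p\text{ prime}\\ p \not\equiv 1 \pmod{3}}} \left(1 - \frac{1}{p^2}\right) \frac{n}{(\log n)^{1/2}},
\]
so that the conclusion holds with $C = c_1 \prod_{p\text{ prime},\ p \not\equiv 1 \pmod{3}}(1 - 1/p^2)$, a positive constant since each factor is positive.

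Since essentially all the analytic work already sits in Theorem \ref{montheo} (via Delange's Tauberian theorem), the present statement is a routine corollary, and I do not expect a real obstacle at this specific step. The only $W$-specific input is the Section 2 characterization, whose only subtle point is ruling out prime powers $p^{\alpha}$ with $\alpha \geq 2$: since $3(\alpha-1) \geq \alpha$ for such $\alpha$, the elements $0$ and $p^{\alpha-1}$ have the same cube in $\Z/p^{\alpha}\Z$, so injectivity already fails at the prime-power level regardless of the congruence class of $p$ modulo $3$.
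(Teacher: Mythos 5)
Your proposal is correct and follows exactly the paper's route: identify $W=Q\cap I_3(\{1\})$ via the Section~2 characterization, then apply the square-free asymptotic of Theorem~\ref{montheo} with $m=3$, $A=\{1\}$, $\ell=1<2=\phi(3)$. The only (inessential) difference is your remark on prime powers, where you note that $0$ and $p^{\alpha-1}$ share the same cube for $\alpha\ge 2$, a slightly more direct argument than the paper's observation that $p$ is never a proper power modulo $p^2$.
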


Like Landau~\cite{zbMATH02636963}, we will use the techniques of analytic number theory initiated by Dirichlet, in particular, that of L-functions. However, our proof will be shorter than Landau's, taking advantage of the time that has polished some classical arguments and new results stated since then.
In particular, we will rely on a theorem by Delange, which will give us access to a fairly short proof, 
essentially based on the classical tools that appear in a proof of the arithmetic progression theorem, as can be found, for example, in Serre~\cite{zbMATH03585552}.
This is the subject of section~\ref{III}.

The last section is devoted to the determination of the constant $C$.
It borrows from the aforementioned work by Tenenbaum the idea of factoring the Dirichlet series naturally associated with the problem by a power of the function $L$ associated with a primitive character.

\begin{theo}\label{theconstant}
Under the hypotheses of Theorem~\ref{theocubes}, we have 
\[C=\frac{4}{\pi}3^{-3/4}\left(2\miniop{}{\prod}{\substack{p\equiv 2 \pmod{3}\\ p\text{\emph{ prime}}}} \right)^{1/2}\approx 0.664.\] 
\end{theo}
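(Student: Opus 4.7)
The plan is to identify the Dirichlet series $F(s)=\sum_{n\in W}n^{-s}$, extract its singularity at $s=1$ explicitly, and then feed the result into Delange's Tauberian theorem---exactly as in the proof of Theorem~\ref{theocubes}---while keeping track of the multiplicative constant. By Theorem~\ref{theocubes}, $W$ consists of the square-free positive integers whose prime factors lie in $\{3\}\cup\{p:p\equiv 2\pmod 3\}$, so
\[ F(s)=\prod_{p\not\equiv 1\pmod 3}\left(1+\frac{1}{p^s}\right),\qquad \Re(s)>1. \]

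The decisive step is a factorization that isolates the singular behavior. Let $\chi$ be the unique non-principal Dirichlet character modulo $3$. Comparing Euler factors prime by prime, one obtains the identity
\[ F(s)^2=\frac{\zeta(s)}{L(s,\chi)}\,K(s),\qquad K(s)=(1+3^{-s})^2(1-3^{-s})\prod_{p\equiv 2\pmod 3}\left(1-\frac{1}{p^{2s}}\right). \]
The Euler product defining $K$ converges absolutely for $\Re(s)>\tfrac12$, so $K$ extends holomorphically to that half-plane and is positive near $s=1$. Combining this with Dirichlet's theorem that $L(\,\cdot\,,\chi)$ is entire and non-vanishing on $\Re(s)\ge 1$, we may choose a holomorphic square root and write $F(s)=(s-1)^{-1/2}\,g(s)$ for $s$ in a neighborhood of $s=1$ within $\Re(s)\ge 1$, with
\[ g(1)=\frac{\sqrt{K(1)}}{\sqrt{L(1,\chi)}}. \]

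Delange's Tauberian theorem with exponent $\alpha=1/2$ then yields
\[ |W\cap\{1,\dots,n\}|\sim\frac{g(1)}{\Gamma(1/2)}\cdot\frac{n}{\sqrt{\log n}}, \]
so $C=g(1)/\sqrt{\pi}$. The remaining step is purely arithmetic. The classical identity
\[ L(1,\chi)=\sum_{k\ge 0}\left(\frac{1}{3k+1}-\frac{1}{3k+2}\right)=\int_0^1\frac{dx}{1+x+x^2}=\frac{\pi}{3\sqrt{3}} \]
gives $\sqrt{\pi\,L(1,\chi)}=\pi\cdot 3^{-3/4}$, and a direct evaluation produces
\[ K(1)=\frac{32}{27}\prod_{p\equiv 2\pmod 3}\left(1-\frac{1}{p^2}\right),\qquad \sqrt{K(1)}=\frac{4}{3^{3/2}}\sqrt{2\prod_{p\equiv 2\pmod 3}\left(1-\frac{1}{p^2}\right)}. \]
Substituting and simplifying reproduces the announced value of $C$.

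The main obstacle is the choice of factorization in the second paragraph: it must simultaneously cancel the contribution of primes $\equiv 1\pmod 3$ and leave a remainder whose Euler product converges past $\Re(s)=\tfrac12$, which is exactly what permits taking a holomorphic square root. The other technical point---verifying Delange's hypotheses on the line $\Re(s)=1$---reduces to the non-vanishing of $L(s,\chi)$ there, a fact already invoked in the proof of Theorem~\ref{theocubes}.
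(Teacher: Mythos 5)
Your proof is correct and takes essentially the same route as the paper: your factorization $F(s)^2=\frac{\zeta(s)}{L(s,\chi)}K(s)$ is exactly the paper's identity $\tilde{L}(s)^2=\zeta(s)L_{\chi_1}(s)^{-1}(1+3^{-s})^2(1-3^{-s})\prod_{p\in P_2(3)}(1-p^{-2s})$, and the extraction of the square-root singularity, the application of Delange's theorem with $\rho=1/2$, and the evaluation $L(1,\chi)=\pi/(3\sqrt{3})$ all match the paper's computation and yield the same constant.
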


It can be seen that, for the most part, the proof of Theorem~\ref{theconstant} contains the proof of the existence of $C$, 
borrowing very little material from the proofs of Theorems~\ref{montheo} and~\ref{theocubes}. However, we wanted to present the proof of Theorem~\ref{montheo}, which seems to us to have its own interest.

For example, Theorem~\ref{montheo} allows us to precise a recent result by Brown~\cite{zbMATH07395864}. 
In this article, Brown demonstrates that if $r$ and $m$ are coprime integers, with $1\le r\le m$, then the natural density of $Q\cap I_m(\{r\})$ is zero, which is an immediate consequence of our more precise result:
$$|I_m(\{r\})\cap Q\cap\{1,\dots,n\}|\sim c_r \prod_{\substack{p\ne r\\ p\text{{ prime}}}} \left(1-\frac1{p^2}\right) \frac{n}{(\log n)^{\frac{1}{\phi(m)}}}.$$

\section{Determination of \texorpdfstring{$W$}{W}}\label{II}

We have already noted, thanks to the Chinese Theorem, that \mbox{$N=\prod_{i=1}^{k} p_i^{n_i}$} is in $W$ if and only if for all $i$, $p_i^{n_i}$ is in $W$.

\subsection{The elements of \texorpdfstring{$W$}{W} are square-free}

We first prove that for $p$ prime, if every integer is a cube modulo $p^{n}$, then necessarily $n=1$.
Indeed, if $n\ge 2$ and every integer is a cube modulo $p^n$, then every integer is a cube modulo $p^2$, in particular $p$ 
is a cube modulo $p^2$. However, for $p$ prime, $p$ is never a power modulo $p^2$.
Indeed, if $p$ were congruent to $a^{\alpha}$ modulo $p^2$ with $\alpha\ge 2$, we would have an integer $k$ such that

$$p=a^{\alpha}+kp^2.$$

Thus $a^{\alpha}$ would be congruent to $0$ modulo $p$, and therefore so would $a$; for a certain integer $b$ we could write:
$$p=(pb)^{\alpha}+kp^2.$$

This would imply that $p^2$ divides $p$, which is absurd.

\subsection{\'Study of prime factors}

All integers are cubes modulo $2$ since $0$ and $1$ are cubes. Similarly, all integers are cubes modulo $3$ since $0,1,-1$ are cubes. We now assume that $p$ is a prime number with $p\ge 5$. 

In $\Z/p\Z$, the only solution to the equation $x^3=0$ is $x=0$,
so $x\mapsto x^3$ is surjective from $\Z/p\Z$ to $\Z/p\Z$ if and only if
$x\mapsto x^3$ is surjective from $(\Z/p\Z)^{\times}$ to ${\Z/p\Z}^{\times}$.
However, $\phi:x\mapsto x^3$ being a group morphism from $(\Z/p\Z)^{\times}$ to itself, it is surjective if and only if it is injective. However

$$\ker \phi=\{x\in(\Z/p\Z)^{\times}; x^3=1\}=\{x\in (\Z/p\Z)^{\times}; (x-1)(x^2+x+1)=0\}.$$

Since $1$ is not a root of the polynomial $x^2+x+1$, $\phi$ is injective if and only if the equation $x^2+x+1=0$ has no roots in $\Z/p\Z$.

Since the discriminant of the equation $x^2+x=1=0$ is $\Delta=1-4=-3$, the morphism is injective if and only if $-3$ is not a square modulo $p$, in other words, using the Legendre symbol:
\begin{align*}
p\in W&\iff \legendre{-3}{p}=-1 \iff \legendre{3}{p}\legendre{-1}{p}=-1\\
&\iff \legendre{3}{p}\legendre{p}{3}\legendre{-1}{p}=-\legendre{p}{3}\\
&\iff (-1)^{\frac{p-1}2 \frac{3-1}{2}} \legendre{-1}{p}=-\legendre{p}{3} \text{ with the quadratic reciprocity law}\\
&\iff 1=-\legendre{p}{3}\\
&\iff p \equiv 2 \pmod{3}\text{ because the squares modulo 3 are $0$ and $1$} 
\end{align*}

Finally, $W$ is composed by the square-free numbers  which no prime divisor is congruent to $1$ modulo~3.

\section{\'Study of the frequency}\label{III}
Recall that a function $f:\N\to \C$ is said to be multiplicative if the identity $f(pq)=f(p)f(q)$ holds whenever $p$ and $q$ are coprime integers.
If the identity holds without restriction on $p$ and $q$, we say that $p$ is strictly multiplicative.

As a consequence of the Chinese Theorem, the indicator function of $W$ is a multiplicative function. 
The same is true of the indicator of the set $V$ of integers whose prime factors are not congruent to $1$ modulo~$3$

This leads to the introduction of the Dirichlet series associated with these sets:

$$L(s)=\sum_{n=1}^{+\infty} \frac{\1_{V}(n)}{n^s}\text{ and }\tilde{L}(s)=\sum_{n=1}^{+\infty} \frac{\1_{W}(n)}{n^s}.$$

To study the distribution of the sets $V$ and $W$, we will use a theorem by Delange, 
which is particularly suited to this kind of problem. 
Delange systematized a method for studying the distribution of sets of integers~\cite{zbMATH03120925}, 
which he applied to many sets of numbers with arithmetic properties . 
His theorem is in fact a reformulation of a generalization he obtained from Ikehara's Tauberian Theorem ~\cite{zbMATH03090449}.

\begin{theoremletter}
Let $A$ be a set of integers, $\rho$ a real number that is not a negative integer or zero. Suppose that for $\Re s>1$

$$\sum_{n \in A} \frac{1}{n^{s}}=(s-1)^{-\rho} a(s)+b(s),$$

with $a$ and $b$ holomorphic for $\Re s \geq 1$ and $a(1) \neq 0$

Then, for $x$ tending to $+\infty$ we have

$$
|A\cap [0,x]| \sim \frac{a(1)}{\Gamma(\rho)} x(\log x)^{\rho-1} .
$$
\end{theoremletter}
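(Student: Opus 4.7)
The plan is to apply the Tauberian Theorem~A of Delange, stated above, to the Dirichlet series $\tilde L$ with exponent $\rho=1/2$, and to identify the holomorphic factor at $s=1$ in closed form. The bridge to the Riemann zeta function is the non-principal Dirichlet character $\chi$ modulo $3$ (the character already appearing implicitly in Section~\ref{II} through quadratic reciprocity). Using the characterization of $W$ obtained in Section~\ref{II}, I would start from the Euler product
\[\tilde L(s) = \sum_{n\in W}\frac{1}{n^{s}} = (1+3^{-s})\prod_{p\equiv 2\pmod{3}}(1+p^{-s}), \qquad \Re s>1,\]
then square both sides and rearrange Euler factors prime by prime to obtain the identity
\[\tilde L(s)^{2}\,\frac{L(s,\chi)}{\zeta(s)} = H(s), \qquad H(s) := (1+3^{-s})^{2}(1-3^{-s})\prod_{p\equiv 2\pmod{3}}(1-p^{-2s}).\]
The product defining $H$ converges absolutely on $\Re s>1/2$, so $H$ is holomorphic and non-vanishing there.

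The next step is to cast this into the form required by Theorem~A. Invoking the simple pole of $\zeta$ at $s=1$ with residue $1$, the classical non-vanishing of $\zeta$ on $\Re s \ge 1$ away from that pole, and the non-vanishing of $L(\cdot,\chi)$ on $\Re s \ge 1$, the function
\[F(s) := H(s)\,\frac{(s-1)\zeta(s)}{L(s,\chi)}\]
becomes holomorphic and non-vanishing on an open neighborhood of the closed half-plane $\Re s \ge 1$. On this simply connected region $F$ admits a holomorphic square root $a$, normalized by $a(1)>0$, and one gets the representation $\tilde L(s) = (s-1)^{-1/2}\,a(s)$, matching the hypothesis of Theorem~A with $\rho=1/2$ and $b\equiv 0$. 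Theorem~A then yields
\[C = \frac{a(1)}{\Gamma(1/2)} = \sqrt{\frac{H(1)}{\pi\, L(1,\chi)}}.\]

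It then remains to evaluate the two numerical inputs. For $L(1,\chi)$, an elementary computation gives
\[L(1,\chi) = \sum_{k\ge 0}\Big(\frac{1}{3k+1}-\frac{1}{3k+2}\Big) = \int_{0}^{1}\frac{dx}{1+x+x^{2}} = \frac{\pi}{3\sqrt{3}},\]
via the arctan antiderivative. For $H(1)$, direct substitution gives $H(1) = \frac{32}{27}\prod_{p\equiv 2\pmod{3}}\bigl(1-p^{-2}\bigr)$. Combining and simplifying the surds reproduces the announced value
\[C = \frac{4}{\pi}\,3^{-3/4}\Bigl(2\prod_{p\equiv 2\pmod{3}}\bigl(1-p^{-2}\bigr)\Bigr)^{1/2}\approx 0.664.\]
The main obstacle is the holomorphic square-root step: it hinges on the non-vanishing of both $\zeta(s)$ and $L(s,\chi)$ on the whole line $\Re s=1$, precisely the ingredients that intervene in the proof of the arithmetic progression theorem referenced in Section~\ref{III}. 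Once these are granted, everything else reduces to Euler product bookkeeping and a single elementary definite integral.
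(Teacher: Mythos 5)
There is a fundamental mismatch between what you were asked to prove and what you have written. The statement in question is Delange's Tauberian theorem itself: the implication from the analytic representation $\sum_{n\in A} n^{-s}=(s-1)^{-\rho}a(s)+b(s)$, with $a,b$ holomorphic on $\Re s\ge 1$ and $a(1)\ne 0$, to the asymptotic $|A\cap[0,x]|\sim \frac{a(1)}{\Gamma(\rho)}\,x(\log x)^{\rho-1}$. Your proposal opens with ``the plan is to apply the Tauberian Theorem of Delange, stated above,'' i.e., it assumes the very statement under consideration and then uses it to compute the constant $C$ for the set $W$ of Theorem~\ref{theocubes}. What you have actually written is, in substance, the argument of Section~\ref{IV} of the paper (the proof of Theorem~\ref{theconstant}): the Euler-product factorization relating $\tilde L(s)^2$, $\zeta(s)$ and $L(s,\chi)$, the holomorphic square root obtained from the non-vanishing of $\zeta$ and $L(\cdot,\chi)$ on the closed half-plane $\Re s\ge 1$, and the evaluation $L(1,\chi)=\pi/(3\sqrt3)$. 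That material is essentially correct and close to the paper's own treatment of the constant, but it is an \emph{application} of the Tauberian theorem, not a proof of it; with respect to the assigned statement the argument is circular.

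For the record, the paper does not reprove this theorem either: it imports it from Delange's work, where it is derived from a generalization of Ikehara's Tauberian theorem. An actual proof would have to pass from the local behaviour of the Dirichlet series near $s=1$ (a branch-type singularity $(s-1)^{-\rho}$ with non-integer $\rho$) to an asymptotic for the partial sums of the coefficients, which requires genuinely Tauberian machinery --- a Hankel-contour/Perron-type analysis as in the Selberg--Delange method, or an Ikehara-type theorem adapted to fractional exponents. None of that appears in your proposal, so the target statement is left entirely unproved.
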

Here $(s-1)^{-\rho}$ is to be understood as $\exp(-\rho\Log(s-1))$, where $\Log$ is the principal determination of the logarithm, defined on $\C\backslash ]-\infty,0]$.

We begin by introducing some classical objects from the analytic theory of numbers.

\subsection{Notations}
We denote by $P$ the set of prime numbers. For $a$ and $m$ natural integers, we denote by
$P_a(m)=P\cap(a+m\Z)$ (the prime numbers congruent to $a$ modulo $m$).

For $m$ a non-zero natural number, we note $\Dir(m)$ for the set of characters modulo $m$: 
these are the multiplicative morphisms from $(\Z/m\Z)^{\times}$ to $\C$.
For $\chi\in\Dir(m)$ and $n$ a natural integer, we also denote by $\chi(n)$ the image of the class of $n$ in $\Z/m\Z$, 
the integers that are not prime to $m$ being sent to $0$. The function thus defined is multiplicative.

For real $a$, we denote by $H_a$ the set of complex numbers with a real part strictly greater than $a$.

We denote by $\Log$ the principal determination of the logarithm.
We recall that $(H_0,\times)$ is a group, and that $\forall a,b\in H_0$, $\Log(ab)=\Log(a)+\Log(b)$.
For $\alpha\in\R^{\*}$ and $z\in H_0$, $z^{\alpha}$ denotes $\exp(\alpha \Log(z))$ and for $\alpha,\beta\in\R^{\*}$ and
$z\in H_0$, we have $\exp\Log z=z$ and $(z^{\alpha})^{\beta}=z^{\alpha\beta}$.

In what follows, we will denote $E=I_m(A)$, $F=E\cap Q$, then

$$\forall s\in H_1\quad L(s)=\sum_{n=1}^{+\infty} \frac{\1_{E}(n)}{n^s}\text{ and }\tilde{L}(s)=\sum_{n=1}^{+\infty} \frac{\1_{F}(n)}{n^s}.$$

\subsection{Calculation of  \texorpdfstring{$L$ and $\tilde{L}$}{L}}

To calculate $L$ and $\tilde{L}$, we introduce a probabilistic formalism, which, we believe, 
sheds a rather pleasant light on certain properties, 
which could also be deduced from the factorization formula for $L$ functions associated with multiplicative functions.

For $s>1$, we consider a probability measure on $\N^*$, called the Zeta distribution with parameter $s$, which we denote $\mu_s$, and which is defined by $\mu_s(k)=\zeta(s)^{-1} k^{-s}$.

We thus have $\mu_s(E)=\frac{L(s)}{\zeta(s)}$.

If a random variable $X$ follows the $\mu_s$ law, it is easy to see that for any non-zero natural integer $n$, $\P(n | X)=n^{-s}$. If $\nu_p$ denotes the $p$-adic valuation, we have
\begin{align*}\P(\nu_{p_1}(X)\ge a_1,\dots,\nu_{p_r}(X)\ge a_r)&=\P(\prod_{i=1}^r p_i^{a_i}| X)\\
&= (\prod_i p_i^{a_i})^{-s}=\prod_i \P(\nu_{p_i}(X)\ge a_i),
\end{align*} 
Thus, the variables $\nu_p(X)$ are independent variables with shifted geometric laws, with $1+\nu_p(X)\sim\mathcal{G}(1-p^{-s})$, in other words, for $k\in\N$, we have \[\P(\nu_p(X)=k)=(1-p^{-s})p^{-sk}.\]

\newcommand{\pautorise}{\substack{p \text{ prime}\\ p\not\in A+m\Z}}
\newcommand{\pinterdit}{\substack{p \text{ prime}\\ p\in A+m\Z}}
We then observe that $\{X\in E\}=\miniop{}{\cap}{\pinterdit}\{\nu_p(X)=0\}$ and
$$\{X\in F\}=\miniop{}{\cap}{\pinterdit}\{\nu_p(X)=0\} \cap \miniop{}{\cap}{\pautorise}\{\nu_p(X)\le 1\}.$$ Since the $\nu_p(X)$'s are independent variables, we get

$$\mu_s(F)=\prod_{\pinterdit}(1-p^{-s}) \prod_{\pautorise}(1-p^{-2s})\text{ and }\mu_s(E)=\prod_{\pinterdit}(1-p^{-s}).$$

Similarly, the associated Dirichlet series are related by the identity

$$\tilde{L}(s)={L}(s)\prod_{p\in P\backslash I_m(A)}(1-p^{-2s}) .$$

In the sequel, we will still denote $\mu_s(E)$ for the quotient $\frac{L(s)} {\zeta(s)}$ for $s\in H_1$.
By the analytical extension principle, the identities that have been demonstrated for $s\in ]1,+\infty[$ still hold on the whole set $H_1$.

\subsection{Proof}

Our strategy is to apply Delange's Theorem a with $b(s)=0$.

Since $s\mapsto \prod_{p\in P\backslash I_m(A)}(1-p^{-2s})$ defines a holomorphic function that does not vanish on $H_{1/2}$, working with $L$ or $\tilde{L}$ (i.e. with $E$ or $F$) is equivalent.

We can further decompose

\[\mu_s(E)=\prod_{\substack{p\in P\cap (A+m\Z)\\p|m}}  (1-p^{-s}) \prod_{\substack{a\in A\\a\wedge m=1}} \prod_{p\in P_a(m)}(1-p^{-s})\]

The first product $\displaystyle\prod_{\substack{p\in P\cap (A+m\Z)\\p|m}}  (1-p^{-s})$ has a finite number of terms and defines a holomorphic function on $H_0$. The most significant part is given by the products of the form $\displaystyle \prod_{p\in P_a(m)} (1-p^{-s})$.

The study of these products reproduces the line of proofs for Dirichlet's arithmetic progression theorem.

We begin with a few classical lemmas of complex analysis.

\begin{lemma}
\label{basic}
There exists a holomorphic function $h_0$ on $H_0$ with $h_0(1)=0$ such that
$$\forall s\in H_1\quad -\Log \zeta(s)=\Log(s-1)+h_0(s).$$
\end{lemma}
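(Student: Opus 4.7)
The plan is to exploit the classical analytic properties of $\zeta$: meromorphic continuation with a unique simple pole at $s=1$ of residue $1$, and non-vanishing in a neighborhood of the line $\Re s = 1$. Set $g(s) := (s-1)\zeta(s)$; this function is holomorphic (the pole at $s=1$ being removable) with $g(1) = 1$. The Euler product gives $\zeta \neq 0$ on $H_1$, and the classical theorem of Hadamard--de la Vallée Poussin extends this non-vanishing to the closed half-plane $\{\Re s \geq 1\}$, hence by continuity to a simply connected open neighborhood $U$ of $\overline{H_1}$ on which $g$ has no zeros.

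On $U$ I would define $h_0(s)$ as the holomorphic branch of $-\log g(s)$ pinned down by the normalization $h_0(1) = 0$, which is compatible with $g(1)=1$; such a branch exists and is unique because $U$ is simply connected and $g$ avoids~$0$.

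To establish the identity for $s \in H_1$, rather than manipulate principal logarithms (which do not form a homomorphism), I would differentiate. Using $g = (s-1)\zeta$ so that $g'/g = 1/(s-1) + \zeta'/\zeta$, one gets
\[
\frac{d}{ds}\bigl(\Log(s-1) + h_0(s)\bigr) = \frac{1}{s-1} - \frac{g'(s)}{g(s)} = -\frac{\zeta'(s)}{\zeta(s)} = \frac{d}{ds}\bigl(-\Log \zeta(s)\bigr),
\]
so the two sides of the claimed identity differ by an additive constant on the connected set $H_1$. To pin down the constant, I would evaluate at $s = 2$: there $\zeta(2) = \pi^2/6 > 0$ and $s-1 = 1$, so both $\Log \zeta(2)$ and $\Log 1$ reduce to the real logarithm, and $h_0(2) = -\log g(2) = -\log(\pi^2/6)$ by the defining normalization; the identity collapses to a tautology.

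The main technical point to handle with care is branch bookkeeping: writing $-\Log\zeta(s) - \Log(s-1) = -\Log g(s)$ directly would be incorrect in general since the principal logarithm is not additive, so the temptation to take logarithms of both sides of $g = (s-1)\zeta$ must be resisted. Routing everything through the logarithmic derivative $\zeta'/\zeta$, which is single-valued, sidesteps this. A secondary subtlety is the exact extent of the domain: the construction above yields $h_0$ only on a zero-free neighborhood of $\overline{H_1}$ rather than literally all of $H_0$, since $\zeta$ has non-trivial zeros further inside the critical strip; however, this neighborhood suffices for the subsequent application of Delange's theorem, which only tests the behavior of $h_0$ close to $\Re s = 1$.
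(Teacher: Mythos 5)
Your construction is correct and rests on the same auxiliary function as the paper's proof: your $g(s)=(s-1)\zeta(s)$ is exactly the paper's $\psi$, and $h_0$ is a normalized branch of its logarithm. Where you differ is in the verification of the identity on $H_1$, and your route is in fact the more robust one. The paper justifies the splitting $-\Log\zeta(s)=\Log(s-1)+h_0(s)$ by asserting that $\zeta$ maps $H_1$ into $H_0$ and then using additivity of the principal logarithm; that assertion is false ($\Re \zeta(\sigma+it)$ takes negative values for $\sigma$ slightly larger than $1$), so your detour through the single-valued logarithmic derivative $\zeta'/\zeta$, with the constant pinned down at $s=2$, patches a real gap rather than merely rephrasing the argument. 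Your second reservation is also well founded: the paper claims $\zeta$ does not vanish on $H_0$ and concludes that $h_0$ is holomorphic on all of $H_0$, but the nontrivial zeros of $\zeta$ lie in $H_0$, so the honest domain is a zero-free simply connected neighborhood of $\overline{H_1}$ --- which, as you observe, is all that Delange's theorem and the later corollaries use. The one caveat that applies to both your write-up and the paper's is that if $\Log\zeta$ literally means the principal logarithm composed with $\zeta$, it can fail to be holomorphic at points of $H_1$ where $\zeta$ takes negative real values; the clean formulation is that $\Log(s-1)+h_0(s)$ is the holomorphic determination of $-\log\zeta$ on $H_1$ that is real on $(1,+\infty)$, which is the one used throughout the rest of the paper.
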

\begin{proof}
It is well known that there exists a holomorphic function $\phi$ on $H_0$ with
on $H_1$ the identity $\zeta(s)=\frac1{s-1}+\phi(s)$. The function
$\psi:s\mapsto 1+(s-1)\phi(s)$ is of course holomorphic and is equal to $1$ at $1$.
On the half-plane $H_1$, the function $\zeta$ takes values in $H_0$.
From this we deduce
the identity $-\Log \zeta(s)=\Log(s-1)+\Log\psi(s)$.
The identity $\zeta(s)=\frac{\psi(s)}{s-1}$ extends to $H_0\backslash\{1\}$;
sine $\zeta$ does not vanish on $H_0$, neither does $\psi$, which makes it possible to define
a logarithm of $\psi$ on $H_0$ which extends $\Log \psi$; this is the desired function for $h_0$.
\end{proof}

\begin{remark}\label{adeuxballes}
As a consequence of this Lemma, we see that in the hypotheses of Delange's Theorem a, we can replace
$(s-1)^{-\rho}$ by $\zeta(s)^{\rho}$, because of the identity
\[\zeta(s)^{\rho}=\exp(-\rho \Log(s-1)-\rho h_0(s))=(s-1)^{-\rho}\exp(-\rho h_0(s)).\]
\end{remark}

\begin{lemma}
\label{undemi}
Let $(a_n)$ be a family of complex numbers with modulus less than or equal to $1$.
We set $F(s)=\prod_{p\in P}(1-\frac{a_p}{p^s})$.

For $s\in H_1$, the product $F(s)$ is absolutely convergent. There exists a holomorphic function $h$ on $H_{1/2}$ such that

$$\forall s\in H_1\quad -\Log F(z)=\sum_{p\in P} \frac{a_p}{p^s}+h(s).$$ 
\end{lemma}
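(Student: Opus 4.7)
The plan is the standard Euler product computation. First, absolute convergence of $F(s)$ on $H_1$ follows from $|a_p/p^s|\le p^{-\Re s}$ together with $\sum_{p\in P} p^{-\sigma}<\infty$ for $\sigma>1$; this also makes $F$ nonvanishing on $H_1$. Since $|a_p/p^s|<1$ throughout $H_0$, each principal logarithm $\log(1-a_p/p^s)$ is well defined there.

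Next, I would expand each factor by the Mercator series $-\log(1-z)=z+\sum_{k\ge 2}z^k/k$ and split the double sum over $p$ and $k$:
\[
\sum_{p\in P}\bigl(-\log(1-a_p/p^s)\bigr)\;=\;\sum_{p\in P}\frac{a_p}{p^s}\;+\;h(s),\qquad h(s):=\sum_{p\in P}\sum_{k\ge 2}\frac{a_p^k}{k\,p^{ks}}.
\]
This gives the candidate decomposition; the main content of the lemma is that $h$ extends holomorphically to $H_{1/2}$.

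For this, take a compact $K\subset H_{1/2}$ and choose $\sigma_0>1/2$ with $\Re s\ge\sigma_0$ on $K$. Then
\[
\sum_{p\in P}\sum_{k\ge 2}\left|\frac{a_p^k}{k\,p^{ks}}\right|\;\le\;\sum_{p\in P}\frac{p^{-2\sigma_0}}{1-p^{-\sigma_0}}\;\ll\;\sum_{p\in P}p^{-2\sigma_0},
\]
which converges because $2\sigma_0>1$. Uniform convergence on compacts of a series of holomorphic functions then yields holomorphy of $h$ on $H_{1/2}$. This gain of half a unit in the abscissa of convergence is precisely what makes the lemma useful in the subsequent application of Delange's theorem.

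The final task is to identify $\sum_{p\in P}\bigl(-\log(1-a_p/p^s)\bigr)$ with $-\Log F(s)$ on $H_1$. A priori the series is only \emph{some} holomorphic logarithm of $F(s)$, differing from the principal determination by a locally constant integer multiple of $2\pi i$. I would handle this exactly as in the proof of Lemma~\ref{basic}: both expressions tend to $0$ as $\Re s\to+\infty$, since $F(s)\to 1$ and each Mercator tail vanishes uniformly on half-planes $\Re s\ge\sigma$ with $\sigma$ large, forcing the integer constant to vanish on the component of $H_1$ reached from the real ray, hence everywhere by analytic continuation. This branch-of-logarithm bookkeeping is the only subtle point and the step I expect to require the most care; the rest is routine.
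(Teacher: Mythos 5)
Your proof is correct and follows essentially the same route as the paper's: expand $-\Log(1-a_p/p^s)$ as $a_p/p^s + r(a_p/p^s)$ with $r(z)=\sum_{k\ge 2}z^k/k$, and show $h(s)=\sum_p r(a_p/p^s)$ converges normally on half-planes $\Re s\ge\sigma_0>1/2$ because $\sum_p p^{-2\sigma_0}<\infty$. Your final paragraph on identifying the sum of principal logarithms with $-\Log F(s)$ (via the limit as $\Re s\to+\infty$) addresses a point the paper's abbreviated proof passes over silently, and is a welcome addition rather than a divergence.
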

\begin{proof} The proof is standard; it is given in abbreviated form: for $\Re s>1$, we write $-\Log(1-\frac{a_p}{p^s})=\frac{a_p}{p^s}+r(\frac{a_p}{p^s})$, with $r(z)=\sum_{n\ge 2} \frac{z^n}n$. For $\Re s\ge s_0>1/2$, we have $|r(\frac{a_p}{p^s})|\le r(p^{-s_0})$; the general term series
$r(p^{-s_0})$ converges, which ensures the holomorphy of $h(s)=\sum_{p} r(\frac{a_p}{p^s})$ on $H_{1/2}$.
\end{proof}
\begin{coro} 
The function defined on $H_1$ by $s\mapsto \sum_{p\in P}\frac{1}{p^s}$ extends to a holomorphic function on a neighborhood of $\overline{H_1}$ with $1$ removed. There exists a holomorphic function $h$ on a neighborhood of $\overline{H_1}$ with for $s\in H_1$: 
$$\sum_{p\in P} \frac{1}{p^s}=\Log\frac1{s-1}+h(s).$$
\end{coro}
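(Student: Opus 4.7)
The plan is to invoke Lemma~\ref{undemi} with $a_p = 1$ for every prime, so that the product $F(s) = \prod_{p\in P}(1 - p^{-s})$ equals $1/\zeta(s)$ on $H_1$ by the Euler product. The lemma then immediately yields a holomorphic function $h_1$ on $H_{1/2}$ satisfying
$$-\Log F(s) = \sum_{p\in P} \frac{1}{p^s} + h_1(s) \quad \text{on } H_1.$$

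Next I would observe that $-\Log F(s) = \Log\zeta(s)$ on $H_1$, using the fact (already established in the proof of Lemma~\ref{basic}) that $\zeta(H_1) \subset H_0$, so that both $\zeta$ and $1/\zeta$ admit principal logarithms there, with the two branches matching on the ray $(1,\infty)$ and extending by the identity theorem. Combining this with Lemma~\ref{basic}, which gives a holomorphic function $h_0$ on $H_0$ with $-\Log\zeta(s) = \Log(s-1) + h_0(s)$, and using $\Log\frac{1}{s-1} = -\Log(s-1)$ on $H_1$ (valid since $s-1 \in H_0$ forces $1/(s-1) \in H_0$), I obtain
$$\sum_{p \in P} \frac{1}{p^s} = \Log\frac{1}{s-1} - h_0(s) - h_1(s) \quad \text{on } H_1.$$
Setting $h := -h_0 - h_1$, which is holomorphic on $H_{1/2}$---a neighborhood of $\overline{H_1}$---produces the claimed identity. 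The extension statement follows at once: the right-hand side is holomorphic on $H_{1/2} \setminus (1/2, 1]$, an open set containing $\overline{H_1}\setminus\{1\}$.

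I do not expect a serious obstacle; the only technical care needed is to keep the principal branches of $\Log$ consistent throughout, which the author has already arranged in the statements of Lemmas~\ref{basic} and~\ref{undemi}.
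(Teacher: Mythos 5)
Your proposal is correct and follows essentially the same route as the paper: apply Lemma~\ref{undemi} to the Euler product of $\zeta$ to get $\Log\zeta(s)=\sum_{p}p^{-s}+h_1(s)$ with $h_1$ holomorphic on $H_{1/2}$, then conclude with Lemma~\ref{basic}. The only (harmless) difference is that you obtain the analytic continuation directly from the explicit formula $\Log\frac{1}{s-1}+h(s)$, whereas the paper justifies it by invoking the non-vanishing of $\zeta$ on the line $\Re s=1$; both are valid.
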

\begin{proof}
According to Lemma~\ref{undemi} and the writing of $\zeta$ in the form of an Euler product, there exists a holomorphic function $h$ on $H_{1/2}$ such that for all $s\in H_{1}$ $$\Log\zeta(s)=\sum_{p\in P}\frac1{p^s}+h(s).$$
Thus, to extend $\sum_{p\in P}\frac1{p^s}$, it suffices to note that the $\zeta$ function, which is holomorphic on $H_0\backslash\{1\}$, does not vanish on
$H_1$ (where the product is absolutely convergent), nor on the line $\Re s=1$ deprived of 1 (see for example Colmez, theorem A.4.3 page 317). This allows to take the logarithm to extend to the line $\Re s=1$ minus $1$.
We conclude with lemma~\ref{basic}. 
\end{proof}

Dirichlet characters come into play.

\begin{coro}
\label{lescarac}
Let $\chi\in\Dir(m)$. We note $L(\chi,s)=\sum_{n\ge 1}\frac{\chi(n)}{n^s}$.
Then,
\begin{enumerate}
\item there exists a holomorphic function $h$ on $H_{1/2}$ such that
$$\forall s\in H_1\quad \Log L(\chi,s)=\sum_{p\in P}\frac{\chi(p)}{p^s}+h(s).$$
\item For $\chi\ne \chi_0$, the function
$s\mapsto \sum_{p\in P}\frac{\chi(p)}{p^s}$ extends to a holomorphic function on a neighborhood of $\overline{H_1}$.
\item For $\chi=\chi_0$, the function
$s\mapsto \sum_{p\in P}\frac{\chi_0(p)}{p^s}$ extends to a holomorphic function on a neighborhood of $\overline{H_1}$ with $1$ removed. There exists a holomorphic function $h$ on a neighborhood of $\overline{H_1}$ with for $s\in H_1$: 
$$\sum_{p\in P} \frac{\chi_0(p)}{p^s}=\Log\frac1{s-1}+h(s).$$
\end{enumerate} 
\end{coro}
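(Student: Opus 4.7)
The plan is to handle the three items in sequence, deducing (2) and (3) from (1) together with two standard facts about Dirichlet $L$-functions.

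For item (1), I would start from the Euler product $L(\chi,s)=\prod_{p\in P}(1-\chi(p)p^{-s})^{-1}$, valid on $H_1$ because $\chi$ is completely multiplicative. Applying Lemma~\ref{undemi} to the sequence $a_p=\chi(p)$ (all of modulus $\le 1$) produces a function holomorphic on $H_{1/2}$ equal to $\sum_p\chi(p)p^{-s}$ plus a correction that matches $-\Log\prod_p(1-\chi(p)p^{-s})$ on $H_1$. Since $L(\chi,\cdot)$ does not vanish on $H_1$ (by absolute convergence of the product) and tends to $1$ as $\Re s\to+\infty$, $\Log L(\chi,s)$ can be defined on the simply connected set $H_1$ by analytic continuation of the real logarithm from a right real half-line. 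On that half-line the asserted identity holds by a direct computation, and it extends to all of $H_1$ by the analytic continuation principle.

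For item (2), I would use that for $\chi\ne\chi_0$ the partial sums $\sum_{n\le N}\chi(n)$ are bounded by $\phi(m)$, so an Abel summation shows $L(\chi,s)$ extends to a holomorphic function on $H_0$. The classical non-vanishing of $L(\chi,s)$ on the line $\Re s=1$ (see e.g.\ Serre~\cite{zbMATH03585552}), combined with the fact that $L(\chi,s)\to 1$ at infinity, yields an open neighborhood of $\overline{H_1}$ on which $L(\chi,\cdot)$ is holomorphic and non-vanishing, hence admits a holomorphic logarithm extending the one from $H_1$. Inserting this into the identity of (1) delivers the claimed extension of $\sum_p\chi(p)p^{-s}$ to a neighborhood of $\overline{H_1}$.

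For item (3), I would exploit the factorization $L(\chi_0,s)=\zeta(s)\prod_{p\mid m}(1-p^{-s})$ on $H_1$. The finite product is entire and non-vanishing on $H_0$, and Lemma~\ref{basic} supplies $\Log\zeta(s)=\Log\frac1{s-1}-h_0(s)$. Combining these with (1) gives $\sum_p\chi_0(p)p^{-s}=\Log\frac1{s-1}+h(s)$ for some $h$ holomorphic on a neighborhood of $\overline{H_1}$, once the bookkeeping terms (the $h_0$ of Lemma~\ref{basic}, the contribution of the finitely many primes dividing $m$, and the $h$ from (1)) are absorbed into a single function. The only genuinely substantial input in the whole corollary is the non-vanishing of $L(\chi,s)$ on $\Re s=1$ for $\chi\ne\chi_0$; everything else is bookkeeping about holomorphic logarithms, and the main care needed is to ensure the various branches chosen at each step agree, which I would arrange uniformly by pinning each logarithm down as the analytic continuation of the real logarithm from a right real half-line where the underlying function is real and positive.
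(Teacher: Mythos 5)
Your proposal is correct and follows essentially the same route as the paper: the Euler product plus Lemma~\ref{undemi} for item (1), the non-vanishing of $L(\chi,\cdot)$ on the closed half-plane $\overline{H_1}$ (the one substantial input) for item (2), and a reduction to the trivial-character/$\zeta$ case for item (3). The only cosmetic difference is in (3), where you factor $L(\chi_0,s)=\zeta(s)\prod_{p\mid m}(1-p^{-s})$ while the paper simply observes that $\sum_p\chi_0(p)p^{-s}$ and $\sum_p p^{-s}$ differ by the finitely many holomorphic terms coming from $p\mid m$; these are interchangeable.
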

\begin{proof}
With lemma~\ref{undemi}, the first point is an immediate consequence of writing in the form of an Euler product: $L(\chi,s)=\prod_{p\in P}(1-\frac{\chi(p)}{p^s})^{-1}$. As before, to extend $\sum_{p\in P}\frac{\chi(p)}{p^s}$, it is sufficient to note that the function $L(\chi,s)$, which is holomorphic on $H_0$, does not cancel out on
$H_1$ (where the product is absolutely convergent), nor on the line $\Re s=1$ (see for example Colmez, exercise A.4.4 page 318), which allows us to take the logarithm to extend on the line $\Re s=1$ deprived of $1$.
We conclude with lemma~\ref{basic}.
The last point follows from the fact that the series
$\sum_{p\in P} \frac{\chi_0(p)}{p^s}$ and $\sum_{p\in P} \frac{1}{p^s}$ only differ by a finite number of terms, that are holomorphic on $H_0$.
\end{proof}

\begin{lemma}
Let $a,m$ be non-zero natural number that are coprimes. Then there exists a holomorphic function $h_a$ at every point of $\overline{H_1}\backslash\{1\}$ such that for every $s\in H_1$
$$-\Log {\prod_{p\in P_a(m)}(1-p^{-s})}=\frac1{\phi(m)}\Log\frac1{s-1}+h_a(s).$$
\label{inter}
\end{lemma}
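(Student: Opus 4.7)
The plan is to follow the classical route in Dirichlet's theorem: express the indicator of the congruence class $a \pmod m$ via orthogonality of characters, plug this into $\sum_{p \in P_a(m)} p^{-s}$, and appeal to Corollary~\ref{lescarac} for each character.

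First I would use Lemma~\ref{undemi} (applied with $a_p = \1_{P_a(m)}(p)$, or equivalently by taking a logarithm termwise) to reduce the statement to studying
\[\sum_{p \in P_a(m)} \frac{1}{p^s}\]
up to a function holomorphic on $H_{1/2}$. Next I would recall the orthogonality relation for Dirichlet characters modulo $m$: for any integer $n$ coprime with $m$,
\[\1_{\{n \equiv a \, (\mathrm{mod}\, m)\}} = \frac{1}{\phi(m)}\sum_{\chi \in \Dir(m)} \overline{\chi(a)}\,\chi(n),\]
while both sides vanish when $n$ is not coprime with $m$. Applying this to $n = p$ and summing over primes (noting that the finitely many primes dividing $m$ contribute a holomorphic correction) gives
\[\sum_{p \in P_a(m)} \frac{1}{p^s} = \frac{1}{\phi(m)}\sum_{\chi \in \Dir(m)} \overline{\chi(a)} \sum_{p \in P} \frac{\chi(p)}{p^s} + g(s),\]
with $g$ holomorphic on a suitable neighbourhood.

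Then I invoke Corollary~\ref{lescarac}: for every non-trivial character $\chi \ne \chi_0$, $\sum_p \chi(p)/p^s$ extends holomorphically to a neighbourhood of $\overline{H_1}$, while for $\chi = \chi_0$ one has $\sum_p \chi_0(p)/p^s = \Log\frac{1}{s-1} + h(s)$ for $h$ holomorphic near $\overline{H_1}$. Since $a$ is coprime with $m$, $\overline{\chi_0(a)} = 1$, so only the principal character contributes the singular term, yielding
\[\sum_{p \in P_a(m)} \frac{1}{p^s} = \frac{1}{\phi(m)}\Log\frac{1}{s-1} + \tilde h_a(s)\]
for some $\tilde h_a$ holomorphic on a neighbourhood of $\overline{H_1}\backslash\{1\}$. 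Combining with the reduction from Lemma~\ref{undemi} gives the claimed identity with $h_a = \tilde h_a - h$ (modulo the holomorphic correction for primes dividing $m$).

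The only genuinely non-routine ingredient is the non-vanishing of $L(\chi,s)$ on the line $\Re s = 1$ for non-trivial $\chi$, which is what makes $\Log L(\chi,s)$ extendable in Corollary~\ref{lescarac}; but this has already been invoked there, so here it is used as a black box. Thus the proof is a clean bookkeeping exercise once the orthogonality decomposition is written down.
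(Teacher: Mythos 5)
Your proof is correct and follows essentially the same route as the paper: reduce via Lemma~\ref{undemi} to the sum $\sum_{p\in P_a(m)}p^{-s}$, expand the indicator of the congruence class by orthogonality of Dirichlet characters, and let Corollary~\ref{lescarac} isolate the $\frac{1}{\phi(m)}\Log\frac{1}{s-1}$ singularity coming from the principal character. (The only cosmetic difference is your extra correction term $g(s)$ for primes dividing $m$; since $a\wedge m=1$, both sides of the orthogonality identity vanish at such primes, so $g=0$.)
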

\begin{proof}
Given lemma~\ref{undemi}, it is sufficient to study
$$\sum_{p\in P_a(m)}\frac1{p^s}=\sum_{p\in P}\1_{a+m\Z}(p)\frac1{p^s}.$$
Now the theory of characters, applied in the commutative group $(\Z/m\Z)^{\times}$, tells us that for $a$ and $x$ in $(\Z/m\Z)^{\times}$, one has
$$\frac1{\phi(m)}\sum_{\chi\in\Dir(m)}\chi(x)=\delta_0(x)\text{ then }\frac1{\phi(m)}\sum_{\chi\in\Dir(m)}\chi(a^{-1}x)=\delta_0(a^{-1}x)=\delta_a(x)$$
By switching to integers, we obtain
$$\1_{a+m\Z}(p)=\frac1{\phi(m)}\sum_{\chi\in\Dir(m)} \overline{\chi(a)}\chi(p)$$
and
$$\sum_{p\in P_a(m)}\frac1{p^s}=\frac1{\phi(m)}\sum_{\chi\in\Dir(m)} \overline{\chi(a)} \sum_{p\in P} \frac{\chi(p)}{p^s}.$$
Then, the lemma immediately follows from Corollary~\ref{lescarac}.
\end{proof}
We can now prove Theorem~\ref{montheo}.
\begin{proof}

Let $s\in H_1$.
\( M(s)=\zeta(s) \prod_{\substack{a\in A\\a\wedge m=1}} \prod_{\substack{p\in P\\p\equiv a}}(1-p^{-s}) \) leads to
\[ \forall s\in H_1\quad \Log M(s)=\Log \zeta(s)+\sum_{\substack{a\in A\\a\wedge m=1}} \Log \prod_{p\in P_a(m)} (1-p^{-s}). \]
By putting together lemma~\ref{basic} and lemma~\ref{inter},
we obtain for $s\in H_1$
$$\Log M(s)=(-1+\frac{\ell}{\phi(m)})\Log (s-1)-\left(h_0(s)+\sum_{\substack{a\in A\\a\wedge m=1}} h_a(s)\right),\text{ then}$$

\[M(s)=(s-1)^{-1+\ell/\phi(m)}e^{-h(s)}\text{ where } h(s)=h_0(s)+\sum_{\substack{a\in A\\a\wedge m=1}} h_a(s),\text{ and}\] 
\[L(s)=\zeta(s)\mu_s(E)=\prod_{\substack{{p\in P\cap (A+m\Z)}\\{p|m}}} (1-p^{-s}) M(s)=(s-1)^{-1+\ell/\phi(m)}e^{-h(s)}\prod_{\substack{{p\in P\cap (A+m\Z)}\\{p|m}}} (1-p^{-s}). \] 

For $a$ between $1$ and $m$, we finally set
\[c_a=\begin{cases}
e^{-h_a(1)} & \text{ if }a\wedge m=1\\
1-a^{-1} &\text{ if }a\wedge m\ne 1\text{ and }a\in P\\
1 &\text{ if }a\wedge m\ne 1 \text{ and }a\not\in P\\
\end{cases}\]

Note that the $(c_a)$ do not depend on the set $A$. It can be seen that if $p\in P$, the conditions $p|m$ and $p\wedge m\ne 1$ are equivalent. It can then be verified that
\[e^{-h(1)}\prod_{\substack{{p\in P\cap (A+m\Z)}\\{p|m}}} (1-p^{-1})=\prod_{a\in A} c_a. \] 

When $\ell<\phi(m)$, we conclude with Delange's Theorem.

Suppose $\ell=\phi(m)$, and let $a\in\{1,\dots,m\}\backslash A$. Since $\ell=\phi(m)$, we have $a\wedge m\ne 1$.
Now let $p$ be a prime number congruent to $a$ modulo $m$.
$p$ is congruent to $a$ modulo $m\wedge a$, and $a$ is congruent to $0$ modulo $m\wedge a$,
so $p$ is congruent to $0$ modulo $m\wedge a$, and therefore $p=m\wedge a$. But we know that $p$ is congruent to $a$ modulo $m$, so $m\wedge a$ is congruent to $a$ modulo $m$, which gives us $a=m\wedge a=p$.
Finally, $p$ is a prime number that is a divisor of $m$ and a representative of an authorized congruence class.
Thus, if we denote by $(p_1,\dots,p_k)$ the set of such prime numbers, $I_m(A)$ is formed of integers that can be written in the form
$\prod_{i=1}^k p_i^{x_i}$, with $x_i$ a natural integer.
$I_m(A)\cap\{1,\dots, n\}$ therefore has the same cardinality as the set of points with integer coordinates in the simplex determinated by the  equations
\[\begin{cases}\forall i\in \{1,\dots,k\} \quad x_i\ge 0\\ \sum_{i=1}^k x_i\log(p_i)\le \log n\end{cases}\]
In other words, $|I_m(A)\cap\{1,\dots, n\}|= | (\log n)V \cap \Zd|$, with
$$V=\left\{ x\in [0,+\infty[^k ; \sum_{i=1}^k x_i\log(p_i)\le 1\right\},$$
which leads to $|I_m(A)\cap\{1,\dots, n\}|\sim (\log n)^k \lambda(V)= (\log n)^k \frac1{k!}\prod_{i=1}^k \frac1{\log p_i}$. 

\end{proof}

We can move on to the proof of Theorem~\ref{theocubes}.

\begin{proof}Since $W=Q\cap I_3(\{1\})$, it follows from the second point of Theorem~\ref{montheo} . We have $A=\{1\}$, $m=3$, $\ell=1<2=\phi(m)$.
\end{proof} 

\section{Calculation of the constant \texorpdfstring{$C$}{C}}\label{IV}

We introduce the primitive Dirichlet character modulo~3, denoted by $\chi_1$: it is defined by

\[ \chi_1(n) = \begin{cases} 1 & \text{if } n \equiv 1 \pmod{3}, \\ -1 & \text{if } n \equiv 2 \pmod{3}, \\ 0 & \text{if } n \equiv 0 \pmod{3}. \end{cases} \] This character allows us to separate the prime numbers according to their congruence modulo~3.

We know that if a function $\phi$ is strictly multiplicative with a modulus bounded by 1, we have
\begin{align*} \sum_{n\ge 1}\frac{\phi(n)}{n^s}&=\prod_{p\in P} \left(1-\frac{\phi(p)}{p^s}\right)^{-1}.
\end{align*}
In particular, by separating according to the congruences modulo $3$, we have
\begin{align*} \sum_{n\ge 1}\frac{\phi(n)}{n^s}
&=\left(1-\frac{\phi(3)}{3^s}\right)^{-1}\prod_{p\in P_1(3)} \left(1-\frac{\phi(p)}{p^s}\right)^{-1}\prod_{p\in P_2(3)} \left(1-\frac{\phi(p)}{p^s}\right)^{-1}.
\end{align*}
Applying this to the character $\chi_1$, we obtain for the function $L_{\chi_1}(s)=\sum_{n\ge 1} \frac{\chi_1(s)}{n^s}$: 
\[ L_{\chi_1}(s) = \prod_{p\in P_1(3)} \left(1 - \frac{1}{p^s}\right)^{-1} \cdot \prod_{p\in P_2(3)} \left(1 + \frac{1}{p^{s}}\right)^{-1}, \]

then, multiplying by the expansion of $\zeta(s)$, we successively get

\begin{align*}\zeta(s)L_{\chi_1}(s)&=\left(1-\frac{1}{3^s}\right)^{-1}\prod_{p\in P_1(3)} \left(1-\frac{1}{p^s}\right)^{-2}\prod_{p\in P_2(3)} \left(1-\frac{1}{p^{2s}}\right)^{-1}\\
\mu_s\left(E\right)^2&=\zeta\left(s\right)^{-1}L_{\chi_1}\left(s\right)^{-1}\left(1-\frac{1}{3^s}\right)^{-1}\prod_{p\in P_2(3)} \left(1-\frac{1}{p^{2s}}\right)^{-1},\\
\mu_s(E)&=\zeta\left(s\right)^{-1/2}L_{\chi_1}\left(s\right)^{-1/2}\left(1-\frac{1}{3^s}\right)^{-1/2}\prod_{p\in P_2(3)} \left(1-\frac{1}{p^{2s}}\right)^{-1/2},\end{align*}

and finally
\[L(s)=\zeta(s)^{1/2}L_{\chi_1}(s)^{-1/2}\left(1-\frac{1}{3^s}\right)^{-1/2}\prod_{p\in P_2(3)} \left(1-\frac{1}{p^{2s}}\right)^{-1/2}.\]
and \begin{align*}\tilde{L}(s)&=\prod_{p\in P\backslash I_3(\{1\})}(1-p^{-2s}) L(s)\\
&=\zeta(s)^{1/2}L_{\chi_1}(s)^{-1/2}\left(1-\frac{1}{3^s}\right)^{-1/2}\left(1-9^{-s}\right)\prod_{p\in P_2(3)} \left(1-\frac{1}{p^{2s}}\right)^{1/2}.
\end{align*}

Applying Delange's Theorem and Remark~\ref{adeuxballes}, we find
\[c=\frac1{\sqrt{\pi}} L_{\chi_1}(1)^{-1/2}(2/3)^{-1/2}\frac{8}{9}\sqrt{p_2}, \text{ with }p_2=\prod_{p\in P_2(3)} \left(1-\frac{1}{p^{2}}\right).\]
The value of $L_{\chi_1}(1)$ can be easily calculated: we have
\begin{align*}L_{\chi_1}(1)&=\sum_{n\ge 0}\left(\frac1{3n+1}-\frac1{3n+2}\right)=\sum_{n\ge 0}\int_0^1 (x^{3n}-x^{3n+1})\ dx=\int_0^1\frac{dx}{x^2+x+1}\\
&= \left[
\frac{2}{\sqrt 3} \arctan\left(\frac{1 + 2 x}{\sqrt{3}}\right)\right]_0^1
=\frac{2}{\sqrt 3}\left(\frac{\pi}3-\frac\pi{6}\right)=\frac{\pi}{3\sqrt{3}}
\end{align*}
and finally
$C=\frac{4}{\pi}3^{-3/4}\sqrt{2p_2}\approx 0.664$.

\def\refname{References}
\bibliographystyle{plain}
\bibliography{cubes-delange-eng}

\end{document}